\newtheorem{definition}{\bf Definition}
\newtheorem{proposition}{\bf Proposition}
\newcommand{\EE}{\mathbb{E}}
\newcommand{\RR}{\mathbb{R}}
\newcommand\by{\mathbf y}
\newcommand\bz{\mathbf z}
\newcommand\cS{\mathcal S}
\newcommand{\diag}{\mathrm{diag}}
\newlength\myindent
\title{\LARGE \bf
Policy Optimization for Linear-Quadratic \\
Zero-Sum Mean-Field Type Games
}
\let\inf\relax
\DeclareMathOperator*\inf{\vphantom{p}inf}
\author{Ren\'e Carmona, Kenza Hamidouche, Mathieu Lauri\`ere, and Zongjun Tan
\thanks{}
\thanks{R. Carmona, K. Hamidouche, M. Lauri\`ere and Z. Tan are with the Department of Operations Research and Financial Engineering, Princeton University, Princeton, NJ 08540, USA
        {\tt\small \{rcarmona, kenzah, lauriere, zongjun.tan\}@princeton.edu}}%
}
\begin{document}

\maketitle
\thispagestyle{empty}
\pagestyle{empty}

\begin{abstract}

In this paper, zero-sum mean-field type games (ZSMFTG) with linear dynamics and quadratic utility are studied under infinite-horizon discounted utility function. ZSMFTG are a class of games in which two decision makers whose utilities sum to zero, compete to influence a large population of agents. In particular, the case in which the transition and utility functions depend on the state, the action of the controllers, and the mean of the state and the actions, is investigated. The game is analyzed and explicit expressions for the Nash equilibrium strategies are derived. Moreover, two policy optimization methods that rely on policy gradient are proposed for both model-based and sample-based frameworks. In the first case, the gradients are computed exactly using the model whereas they are estimated using Monte-Carlo simulations in the second case. Numerical experiments show the convergence of the two players' controls as well as the utility function when the two algorithms are used in different scenarios. 

\end{abstract}

\section{Introduction}

Decision making in multi-agent systems has recently received an increasing interest from both theoretical and empirical viewpoints.
Multi-agent reinforcement learning (MARL) and stochastic games were shown to model well systems with a small number of agents. However, as the number of agents becomes large, analysing such systems becomes intractable due to the exponential growth of agent interactions and the prohibitive computational cost. To tackle this issue, mean-field approximations, borrowed from statistical physics, were considered to study the limit behaviour of systems in which the agents are indistinguishable and their decisions are influenced by the empirical distribution of the other agents. 

Mean-field games (MFGs) \cite{lasry2007mean,HuangMalhameCaines2006_MR2346927} and their variants mean-field type control (MFC) \cite{bensoussan2007representation} and mean-field type games (MFTG) \cite{barreiro2020discrete} consist of studying the global behaviour of systems composed of infinitely many agents which interact in a symmetric manner. In particular, the mean-field approximation captures all agent-to-agent interactions that, individually, have a negligible influence on the overall system's evolution.

An archetypal MFTG is mean-field zero-sum games. Two-player zero-sum games in their standard stochastic form, with no mean-field interactions, have been extensively studied in the literature. In this class of games, two decision makers compete to respectively maximize and minimize the same utility function. The large literature on this topic is motivated by many applications and by connections with robust control \cite{bacsar2008h}. Recently, generalizations to the case where the state dynamics is of MKV type have been introduced in continuous time over a finite horizon. Optimality conditions have been derived using the theory of backward stochastic differential equations (BSDEs)  in~\cite{xu2012zerosum}, using the dynamic programming principle and partial differential equations (PDEs) in~\cite{cosso2019zero} or using a weak formulation in~\cite{MR4104525}. All these works assume compactness of the action space, and hence are not applicable to a general linear-quadratic setting.

Although general stochastic problems with mean-field interactions can be studied from a theoretical perspective, explicit computation of the solution and numerical illustration of the Nash equilibrium are challenging. In standard optimal control, linear-quadratic (LQ) models, where the dynamics are linear and the utility is quadratic, usually have analytical or easily tractable solutions, which makes them very popular. These problems have also attracted much interest in the optimization and machine learning communities, since algorithms with proof of convergence can be developed, see e.g. \cite{fazel2018global} where the authors prove convergence of model-based and sample-based policy gradient methods for an LQ optimal control problem. Sample-based methods have also been used to solve LQ zero-sum games. In \cite{al2007model}, a discrete-time linear quadratic zero-sum game with infinite time horizon is studied and a Q-learning algorithm is proposed, which is proved to converge to the Nash equilibrium.  
In \cite{carmona2019linear}, the authors study mean-field control problems with a focus on linear-quadratic models in discrete time and propose a model-free policy gradient algorithm that is shown to converge to the optimal control. A model-free Q-learning algorithm is developed in \cite{carmona2019model} for MFC problems. The MFC problem is first cast as a Markov decision process (MDP) with deterministic transitions and then the convergence of the Q-learning algorithm is analysed. In \cite{zhang2019policy}, the authors study LQ zero-sum games and propose three projected nested-gradient methods that are shown to converge to the Nash equilibrium of the game. However, none of these works tackle mean-field interactions in a zero-sum setting.

In the present work, under an infinite-horizon
and discounted utility function, we investigate zero-sum mean-field type games (ZSMFTG) of linear-quadratic type, which, to the best of our knowledge, had not been the focus of any work before. In particular, we address the case in which the transition and utility functions do not only depend on the state and the action of the controllers, but also on the mean of the state and the actions. Moreover, the state is subject to a common noise. The structure of the problem and the infinite horizon regime allow us to identify the form of the equilibrium controls as linear combinations of the state and its mean conditioned on the common noise, both in the open-loop and the closed-loop settings. To learn the equilibrium, we extend the policy-gradient techniques developed in~\cite{carmona2019linear} for MFC, to the ZSMFTG framework. We design policy optimization methods in which the gradients are either computed exactly using the model or estimated using Monte-Carlo samples if the model is not fully known.

The rest of the paper is organized as follows. In Section~\ref{sec1:prob}, the zero-sum mean-field type game is formulated, preceded by a $N$-agent control problem which motivates this setting.  
Optimality conditions for the equilibrium are briefly discussed in Section~\ref{sec2:opt}, showing that the equilibrium controls of the two players are linear in the state and its mean. 
Model-based and model-free policy optimization methods are proposed in Section~\ref{sec4:algo}. In Section \ref{sec5:num}, we report numerical experiments to show the convergence of the controls and the utility function for different scenarios. Section~\ref{sec6:conc} concludes the paper. More details are provided in the long version of the paper~\cite{carmona2020lqzrmftglong}.

\section{Model and Problem Formulation}
In this section, we first present a zero-sum game in which two controllers compete to influence a population of agents interacting in a symmetric way, through the empirical distribution of their states and actions. We then present an asymptotic mean-field version of the game, in which the two controllers influence a state whose dynamics is of MKV type. 

\label{sec1:prob}
\subsection{$N$-agent problem}
Consider a system composed of a population $\{1,\dots,N\}$ with $N$ indistinguishable \emph{agents}. We investigate the case in which these agents have symmetric interactions and are influenced by two \emph{decision makers}, also called \emph{controllers} or \emph{players}, competing to optimize a criterion. In particular, we are interested in the linear-quadratic zero-sum case. Here, the state evolution of an agent $i\in\{1,\dots,N\}$ is given by
\begin{multline}
\label{eq:dyn-N-agent-general}
    x^i_{t+1} = A x^i_t + \bar{A} \bar{x}_t + B_{1} u^i_{1,t} + \bar{B}_{1} \bar{u}_{1,t} 
    \\
    + B_{2}u^i_{2,t} + \bar{B}_{2}\bar{u}_{2,t} 
    + \epsilon^i_{t+1} + \epsilon^0_{t+1},
\end{multline}
with initial condition $x^i_0 = \epsilon^i_0 + \epsilon^0_0$, where  $x^i_0$ is the initial state of agent $i$ to which we introduce randomness with $\epsilon^i_0$ and $ \epsilon^0_0$. At each time $t$, $x_t^i \in \RR^d$ corresponds to the state of the $i$-th agent in the population, and $u^i_{1,t} \in \RR^\ell$ and $u^i_{2,t} \in \RR^\ell$ are the controls prescribed to this agent respectively by the first and the second decision maker. The noise terms $\epsilon^0_{t+1}$ and $\epsilon^i_{t+1}$ are independent of each other and of $\epsilon^0_0$ and $ \epsilon^i_0$. Moreover, the noise terms $\epsilon^0_{t+1}$ for $t \ge 0$ are assumed to be identically distributed with mean $0$, and similarly for $\epsilon^i_{t+1}$ for $t \ge 0$. The interpretation of the noise terms is that $\epsilon^0_{t}$ is a common noise affecting the state of all the agents, whereas  $\epsilon^i_{t}$ is an indiosyncratic noise affecting only the state of the $i$-th agent. $A, \bar{A}, B_i, \bar{B}_i$ are fixed matrices with suitable dimensions.  Here, $\bar{x}_t=\frac{1}{N}\sum_{i=1}^{N} x_t^i$, is the sample average of the individual states, and similarly for $\mathrm{u}_1$ and $\mathrm{u}_2$: $\bar{u}_{j,t}=\frac{1}{N}\sum_{i=1}^{N} u_{j,t}^i$. The instantaneous utility is defined by
\begin{equation}
\label{eq:MKV_running_cost_c}
\begin{aligned}
    &c(x, \bar{x}, \mathrm{u}_{1}, \bar{\mathrm{u}}_{1}, \mathrm{u}_{2}, \bar{\mathrm{u}}_{2}) =
    (x-\bar{x})^{\top} Q (x-\bar{x}) + \bar{x}^{\top} (Q+\bar{Q}) \bar{x}
    \\ 
    &\qquad + (\mathrm{u}_{1}-\bar{\mathrm{u}}_{1})^{\top} R_{1} (\mathrm{u}_{1}-\bar{\mathrm{\bf \mathrm{u}}}_{1}) + \bar{\mathrm{u}}^\top_{1} (R_{1}+\bar{R}_{1}) \bar{\mathrm{u}}_{1} 
    \\
    &\qquad - (\mathrm{u}_{2} - \bar{\mathrm{u}}_{2})^\top R_{2} (\mathrm{u}_{2} - \bar{\mathrm{u}}_{2}) - \bar{\mathrm{u}}^\top_{2} (R_{2}+\bar{R}_{2}) \bar{\mathrm{u}}_{2}.
\end{aligned}
\end{equation} 
where $Q,\bar Q, R_i, \bar R_i$ are symmetric matrices of suitable sizes such that $R_i, R_i + \bar R_i$ for $i=1,2$ are positive definite.

The goal of each controller in this zero-sum problem is to minimize (resp. maximize) the $N$-agent utility functional 
\begin{equation*}
    J^N(\mathrm{\bf {\underline u}}_1, \mathrm{\bf {\underline u}}_2) = \EE\Big[\sum_{t=0}^{+\infty} \gamma^{t} \bar{c}^N({\underline x}_t, {\underline u}_{1,t}, {\underline u}_{2,t}) \Big],
\end{equation*}
where ${\underline x}_t = (x_t^1,\dots,x_t^N)$, and ${\bf {\underline u}}_i = ({\underline u}_{i,t})_t$ with ${\underline u}_{i,t} = (u^1_{i,t},\dots,u^N_{i,t})$ (we use a boldface to denote a function of time and an underline to denote a vector of size $N$), and $\bar c^N$ is the average utility, defined by
\begin{equation*}
    \bar{c}^N({\underline x}_t, {\underline u}_{1,t}, {\underline u}_{2,t}) 
    = \frac{1}{N} \sum_{i=1}^{N} c(x^i_t,\bar{x}_t, u^i_{1,t}, \bar{u}_{1,t}, u^i_{2,t}, \bar{u}_{2,t}).
\end{equation*}

 The minimax problem is defined as follows,
 \begin{equation}
        \inf_{\mathrm{\bf \underline{u}}_1} \sup_{\mathrm{\bf \underline{u}}_2} J^N(\mathrm{\bf \underline{u}}_1, \mathrm{\bf \underline{u}}_2).
\end{equation}

This problem is a generalization of the mean-field control setup, in which there is a single decision maker. It can also be viewed as a variant of Nash mean-field control setup studied in~\cite{bensoussan2018mean} or mean-field type games~\cite{djehiche2016mean} in which several mean-field decision makers compete in a general-sum game. 
An interesting special case is when each decision maker controls a different population. See~\cite[Remark 1]{carmona2020lqzrmftglong} for more details.

\subsection{Asymptotic mean-field problem}

Here, we consider the limit of the $N$-agent case. The dynamics is given by: $
    x_0 = \epsilon^0_0 + \epsilon^1_0,
$ and for $t \ge 0$, 
\begin{multline}
\label{eq:MKV-state_ZS}
    x_{t+1} = A x_t + \bar{A} \bar{x}_t + B_1 u_{1,t} + \bar{B}_1 \bar{u}_{1,t}
    \\
    + B_2 u_{2,t}+\bar{B}_2\bar{u}_{2,t}  
    + \epsilon^0_{t+1} + \epsilon^1_{t+1}.
\end{multline}

When considering the mean-field problem, we use the notation $\bar{x}_t = \EE[x_t | (\epsilon^0_{s})_{0 \le s \le t}]$ for the expectation of the state conditional on the realization of the common noise, and likewise for $\mathrm{\bf u}_1$ and $\mathrm{\bf u}_2$. Note that~\eqref{eq:MKV-state_ZS} is a dynamics of MKV type since it is influenced by its distribution and the distribution of the actions.
The utility function takes the form
\begin{equation}
\label{fo:MKV-discounted_utility_ZS}
    J(\mathrm{\bf u}_1, \mathrm{\bf u}_2) = \EE\Big[\sum_{t=0}^{+\infty} \gamma^{t} c_t \Big],
\end{equation}
where $\gamma \in [0,1]$ is a discount factor, and the instantaneous utility at time $t$ is defined as
\begin{align}
\label{eq:instantaneous-utility}
    c_t 
    & = c(x_t, \bar{x}_t, u_{1,t}, \bar{u}_{1,t}, u_{2,t}, \bar{u}_{2,t}),
\end{align}
where the function $c$ is as in the $N$-agent problem.  
The goal is to find a Nash equilibrium (NE), i.e., $( \mathrm{\bf u}^*_1, \mathrm{\bf u}^*_2)$ such that
\begin{align}
\label{eq:prob_for}
    J( \mathrm{\bf u}^*_1, \mathrm{\bf u}^*_2)
    =
    \inf_{\mathrm{\bf u}_1} \sup_{\mathrm{\bf u}_2} J(\mathrm{\bf u}_1, \mathrm{\bf u}_2).
\end{align}
Next, we study the existence of the NE and derive its closed-form expression for the formulated ZSMFTG.

\section{Optimality condition and gradient expression} 
\label{sec2:opt}

We now characterize the structure of a NE in terms of linear combinations of the state $x_t$ and conditional mean $\bar x_t$.

To alleviate the notation, let $\tilde A  = A + \bar {A}$, $\tilde Q  = Q + \bar {Q}$, $\tilde B_i  = B_i + \bar {B}_i$, $\tilde R_i  = R_i + \bar {R}_i$, $i=1,2$.  
Let us denote
\begin{align*} 
 	&\Gamma_i=(-1)^i\frac12 R_i^{-1}B_i^{\top},
 	\Xi_1 =  (-1)^i\frac12 R_i^{-1}\bigl[\bar{B}_i^{\top} - \bar{R}_i\tilde{R}_i^{-1}\tilde{B}_i^{\top}\bigr],
	\\
	&\Lambda_i =\Gamma_i+\Xi_i= (-1)^i \frac12 \tilde{R}_i^{-1}\tilde{B}_i^{\top}, \quad i=1,2.  
\end{align*}

To investigate the solution of (\ref{eq:prob_for}) and derive the closed-form expressions for the equilibrium controls in terms of the idiosyncratic and mean-field state processes, we introduce the following Riccati equations
\begin{equation}
	\label{eq:main_ARE_P_ZS}
		 \gamma [A^{\top} P + 2Q]\left[ A + \big(B_1 \Gamma_1 + B_2 \Gamma_2\big)  P \right] = P,
	\end{equation}
	and
	\begin{equation}
\label{eq:main_ARE_Pbar_ZS}
	\gamma\bigl[ \tilde A^{\top} \bar{P}+2 \tilde Q\bigr]\left[\tilde A +\big(\tilde B_1\Lambda_1 + \tilde B_2\Lambda_2\big)\bar{P} \right] =\bar{P}.
\end{equation}

Under suitable conditions and relying on a form of stochastic Pontryagin maximum principle (see \cite{bensoussan2007representation} for the zero-sum case without mean-field interactions and~\cite{CarmonaDelarue_book_I} for the case of mean-field interactions but without zero-sum structure), the ZSMFTG admits an open-loop Nash equilibrium, say $(\mathrm{\bf u}^*_1, \mathrm{\bf u}^*_2)$. These controls correspond to the open-loop saddle point and can be explicitly written in terms of the solutions $P, \bar P$ of the Riccati equations above as
	\begin{equation}
	\label{eq:MKV-opt-ctrl-formula}
		 u^*_{i,t} = \Gamma_i P(x_t - \bar x_t) + \Lambda_i \bar P \bar x_t, \textrm{ for } i=1,2.
 	\end{equation}

This relies on a form of Pontryagin maximum principle for mean-field dynamics. To keep the presentation concise, the proof is provided in the long version of the paper~\cite{carmona2020lqzrmftglong} (see Propositions~12 and~14, and Corollary~20 therein).

According to the above result, it is sufficient to look for  $( K^*_i, L^*_i), i=1,2$ such that
$
         u^*_{i,t} = (-1)^i K^*_i(x^{ \mathrm{\bf u}^*_1, \mathrm{\bf u}^*_2}_t - \bar{x}^{ \mathrm{\bf u}^*_1, \mathrm{\bf u}^*_2}_t) + (-1)^i L^*_i \bar{x}^{ \mathrm{\bf u}^*_1, \mathrm{\bf u}^*_2}_t,
    $
for $i=1,2$.

Optimizing over all possible open-loop controls is infeasible from a numerical perspective because it is a set of all stochastic processes which does not admit a simple representation. Hence, we focus on closed-loop Nash equilibrium with the above linear structure in the sequel (which allows us to do the optimization over a small number of parameters, namely the coefficients of the linear combination). In fact,  under suitable conditions, looking for closed-loop controls which are linear in $x$ and $\bar x$ leads to the same Nash equilibrium as open-loop controls, see~\cite[Section 6]{carmona2020lqzrmftglong}.

We henceforth replace problem~\eqref{eq:prob_for} by the following problem, for which optimality conditions are studied in~\cite[Section 5]{carmona2020lqzrmftglong}.  
Each player $i=1,2$ chooses parameter $\theta^*_i = ( K^*_i, L^*_i)$ 
such that
$$
    J(\mathrm{\bf u}^{ \theta^*_1}_1, \mathrm{\bf u}^{\theta^*_2}_2)
    =
    \inf_{\theta_1} \sup_{\theta_2} J(\mathrm{\bf u}^{\theta_1}_1, \mathrm{\bf u}^{\theta_2}_2),
$$
where for $\theta = (\theta_1,\theta_2)$,
$
    u^{\theta_1,\theta_2}_{i,t} = (-1)^i K_i(x^{\mathrm{\bf u}_1^{\theta_1},\mathrm{\bf u}_2^{\theta_2}}_t 
    - \bar{x}^{\mathrm{\bf u}_1^{\theta_1},\mathrm{\bf u}_2^{\theta_2}}_t) 
    + (-1)^i L_i \bar{x}^{\mathrm{\bf u}_1^{\theta_1},\mathrm{\bf u}_2^{\theta_2}}_t,
$ for $i=1,2$.

For simplicity, we introduce the following notation
$
    x^{\mathrm{\bf u}_1^{\theta_1},\mathrm{\bf u}_2^{\theta_2}}
    =
    x^{\theta_1,\theta_2},
$
and since we focus on linear controls, we redefine the utility as
$
    C(\theta_1,\theta_2)
    =
    J(\mathrm{\bf u}^{\theta_1}_1, \mathrm{\bf u}^{\theta_2}_2).
$
Moreover, we introduce $y^{K_1,K_2}_t = x^{\theta_1,\theta_2}_t - \bar{x}^{\theta_1,\theta_2}_t$ and $z^{L_1,L_2}_t = \bar{x}^{\theta_1,\theta_2}_t$, which is justified by the fact that the dynamics of $\by$ and $\bz$ depend respectively only on $(K_1,K_2)$ and $(L_1,L_2)$. 

Let $P^y_{K_1,K_2}$ be a solution to the linear equation
    \begin{align}
        &P^y_{K_1,K_2}=
        \label{eq:eq-Py_K1K2}
        Q + K_1^\top R_1 K_1 - K_2^\top R_2 K_2
        \\
        &+ \gamma (A - B_1 K_1 + B_2 K_2)^\top P^y_{K_1,K_2} (A - B_1 K_1 + B_2 K_2),
        \notag
    \end{align}
and let $P^z_{L_1,L_2}$ be a solution to the linear equation
    \begin{align}
        &P^z_{L_1,L_2}=
        \label{eq:eq-Pz_L1L2}
        \tilde Q  + L_1^\top \tilde R_1 L_1 - L_2^\top \tilde R_2 L_2
        \\
        \notag
        & + \gamma (\tilde A - \tilde B_1 L_1 + \tilde B_2 L_2)^\top P^z_{L_1,L_2} (\tilde A - \tilde B_1 L_1 + \tilde B_2 L_2).
    \end{align}

In order to guarantee that the above equations have solutions, we introduce the notion of stabilizing parameters. 
\begin{definition}
The set of stabilizing parameters is defined as follows:
\begin{align}
    \Theta =
    &\Big\{(K_1,L_1,K_2,L_2) \,:\,
    \gamma \|A - B_1 K_1 + B_2 K_2\|^2 < 1,
    \nonumber \\
    &\quad \gamma \| \tilde A - \tilde B_1 L_1 + \tilde B_2 L_2\|^2 < 1 
    \Big\}.
    \label{eq:Theta_stable_set}
\end{align}
\end{definition}

More details on this closed-loop information structure and the corresponding optimality conditions are provided in~\cite[Section 5]{carmona2020lqzrmftglong}.  

We now prove the following result, which provides an explicit expression for the gradient of the utility function with respect to the control parameters in terms of the solution to the equations~\eqref{eq:eq-Py_K1K2} and~\eqref{eq:eq-Pz_L1L2}.  
\begin{proposition}[Policy gradient expression]
\label{lem:PG_expression}
For any $\theta = (\theta_1,\theta_2) \in \Theta$, we have for $j=1,2$, the gradient
\begin{align}
\label{eq:PG_expression_Kj}
    \nabla_{K_j}C(\theta_1,\theta_2)
    & =
    2 E^{y,j}_{K_1,K_2} \Sigma^y_{K_1,K_2}
\end{align}
where
$    \begin{bmatrix}
        E^{y,1}_{K_1,K_2}\\
        E^{y,2}_{K_1,K_2}
    \end{bmatrix}
    = - \gamma  
        \begin{bmatrix} 
        B_1^\top  P^y_{K_1,K_2} A
        \\
       - B_2^\top  P^y_{K_1,K_2} A
        \end{bmatrix}
    +
    \mathbf{R} 
    \begin{bmatrix}
        K_1\\
        K_2
    \end{bmatrix}
$
with 
$$
    \mathbf{R} =
    \begin{bmatrix}
        R_1 + \gamma B_1^\top  P^y_{K_1,K_2} B_1 & - \gamma B_1^\top P^y_{K_1,K_2} B_2   
    \\
    - \gamma B_2^\top P^y_{K_1, K_2} B_1 &  - R_2 + \gamma B_2^\top  P^y_{K_1,K_2} B_2 
    \end{bmatrix}
$$
and
$
    \Sigma^y_{K_1,K_2}
    =\EE\left[ \sum_{t \ge 0} \gamma^t y^{K_1,K_2}_t (y^{K_1,K_2}_t)^\top \right].
$

Similarly, for $j=1,2$, 
$
    \nabla_{L_j}C(\theta_1,\theta_2)
    =
    2 E^{z,j}_{L_1,L_2} \Sigma^z_{L_1,L_2}
$ 
where
$
    \begin{bmatrix}
        E^{z,1}_{L_1,L_2}\\
        E^{z,2}_{L_1,L_2}
    \end{bmatrix}
    = - \gamma  
        \begin{bmatrix} 
        \tilde B_1^\top  P^z_{L_1,L_2} \tilde A
        \\
       - \tilde B_2^\top  P^z_{L_1,L_2} \tilde A
        \end{bmatrix}
    +
    \tilde {\mathbf{R}} 
    \begin{bmatrix}
        L_1\\
        L_2
    \end{bmatrix}
$
with 
$$
    \tilde {\mathbf{R}} =
    \begin{bmatrix}
        \tilde R_1 + \gamma \tilde B_1^\top  P^z_{L_1,L_2} \tilde B_1 & - \gamma \tilde B_1^\top P^z_{L_1,L_2} \tilde B_2   
    \\
    - \gamma \tilde B_2^\top P^z_{L_1,L_2} \tilde B_1 &  - \tilde R_2 + \gamma \tilde B_2^\top  P^z_{L_1,L_2} \tilde B_2 
    \end{bmatrix}
$$
and
$
    \Sigma^z_{L_1,L_2}
    =\EE\left[ \sum_{t \ge 0} \gamma^t z^{L_1,L_2}_t (z^{L_1,L_2}_t)^\top \right].
$
\end{proposition}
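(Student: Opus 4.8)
The plan is to restrict attention to linear policies, reduce the game to a pair of \emph{decoupled} (indefinite) linear-quadratic problems — one for the fluctuation process $\by$ and one for the conditional-mean process $\bz$ — and then obtain the gradient by differentiating the Lyapunov equations~\eqref{eq:eq-Py_K1K2}--\eqref{eq:eq-Pz_L1L2}.

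\emph{Step 1: decoupling and cost splitting.} Inserting the ansatz $u_{i,t}^{\theta_1,\theta_2} = (-1)^i K_i y_t + (-1)^i L_i z_t$ into~\eqref{eq:MKV-state_ZS} and conditioning on the common noise, I would first check $\bar u_{i,t} = (-1)^i L_i z_t$ and $u_{i,t}-\bar u_{i,t} = (-1)^i K_i y_t$ (using $\EE[x_t-\bar x_t \mid (\epsilon^0_s)_{s\le t}]=0$), and then that $\by,\bz$ satisfy the autonomous recursions $y_{t+1}=(A-B_1K_1+B_2K_2)y_t+\epsilon^1_{t+1}$ and $z_{t+1}=(\tilde A-\tilde B_1L_1+\tilde B_2L_2)z_t+\epsilon^0_{t+1}$. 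Since the running utility~\eqref{eq:MKV_running_cost_c} has no cross terms coupling $(x-\bar x,u_i-\bar u_i)$ to $(\bar x,\bar u_i)$, substituting the ansatz gives the pointwise identity $c_t = y_t^\top(Q+K_1^\top R_1K_1-K_2^\top R_2K_2)y_t + z_t^\top(\tilde Q+L_1^\top\tilde R_1L_1-L_2^\top\tilde R_2L_2)z_t$, hence $C(\theta_1,\theta_2)=C^y(K_1,K_2)+C^z(L_1,L_2)$ with $C^y,C^z$ the discounted costs of the two subsystems. Thus $\nabla_{K_j}C=\nabla_{K_j}C^y$, $\nabla_{L_j}C=\nabla_{L_j}C^z$, and the $\bz$-subsystem is the image of the $\by$-subsystem under $(A,B_i,Q,R_i,K_i,\epsilon^1)\mapsto(\tilde A,\tilde B_i,\tilde Q,\tilde R_i,L_i,\epsilon^0)$; so it suffices to prove the formula for $\nabla_{K_j}C^y$. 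I abbreviate $\hat A := A-B_1K_1+B_2K_2$ and $S := Q+K_1^\top R_1K_1-K_2^\top R_2K_2$, and drop the subscript $(K_1,K_2)$ on $P^y$, $\Sigma^y$ when unambiguous.

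\emph{Step 2: well-posedness and differentiation.} On $\Theta$ the bound $\gamma\|\hat A\|^2<1$ makes $X\mapsto\gamma\hat A^\top X\hat A$ a contraction, so~\eqref{eq:eq-Py_K1K2} has the unique solution $P^y=\sum_{t\ge0}\gamma^t(\hat A^\top)^tS\hat A^t$, and the same bound makes $\Sigma^y=\sum_{t\ge0}\gamma^t\EE[y_ty_t^\top]$ absolutely convergent and $C^y$ finite; both $P^y$ and $\Sigma^y$ are smooth in $(K_1,K_2)$ on $\Theta$, being obtained by inverting a linear system with polynomial coefficients. Unrolling $\EE[y_ty_t^\top]$ and using Fubini I would record the cost–value identity $C^y(K_1,K_2)=\mathrm{tr}(S\,\Sigma^y)=\mathrm{tr}\big(P^y(\EE[y_0y_0^\top]+\tfrac{\gamma}{1-\gamma}\mathrm{Cov}(\epsilon^1))\big)$. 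Differentiating~\eqref{eq:eq-Py_K1K2} in a direction $\delta$, the derivative $P':=D_{K_j}P^y[\delta]$ solves $P'=F_j+\gamma\hat A^\top P'\hat A$ with $F_j=D_{K_j}S[\delta]+\gamma(D_{K_j}\hat A[\delta])^\top P^y\hat A+\gamma\hat A^\top P^y(D_{K_j}\hat A[\delta])$, where $D_{K_1}\hat A[\delta]=-B_1\delta$, $D_{K_2}\hat A[\delta]=B_2\delta$, $D_{K_1}S[\delta]=\delta^\top R_1K_1+K_1^\top R_1\delta$, $D_{K_2}S[\delta]=-(\delta^\top R_2K_2+K_2^\top R_2\delta)$. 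Hence $P'=\sum_t\gamma^t(\hat A^\top)^tF_j\hat A^t$; differentiating the cost–value identity and cycling the trace, the $\EE[y_0y_0^\top]$- and $\mathrm{Cov}(\epsilon^1)$-contributions reassemble into $\Sigma^y$ exactly as above, giving $D_{K_j}C^y[\delta]=\mathrm{tr}(F_j\Sigma^y)$. Substituting $F_j$, using symmetry of $P^y$ and $\Sigma^y$, and reading off the coefficient of $\delta$ yields $\nabla_{K_1}C^y=2(R_1K_1-\gamma B_1^\top P^y\hat A)\Sigma^y$ and $\nabla_{K_2}C^y=2(-R_2K_2+\gamma B_2^\top P^y\hat A)\Sigma^y$; expanding $\hat A=A-B_1K_1+B_2K_2$ inside the parentheses turns these into $2E^{y,1}_{K_1,K_2}\Sigma^y$ and $2E^{y,2}_{K_1,K_2}\Sigma^y$ with the block matrix $\mathbf R$ as stated, and the $\bz$-formulas follow by the substitution of Step 1. (Alternatively, Step 2 could be replaced by the performance-difference / advantage-function argument of~\cite{fazel2018global,carmona2019linear}, which reaches the same expressions without differentiating the Lyapunov equation.)

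\emph{Main obstacle.} The argument is conceptually routine; the care lies in the bookkeeping of the additive noise and the discount factor. Because the noise covariance is policy-independent, the noise only adds a policy-dependent \emph{constant} $\tfrac{\gamma}{1-\gamma}\mathrm{tr}(P^y\mathrm{Cov}(\epsilon^1))$ to $C^y$, but its derivative is not separately zero and must be recombined with the derivative of the $y_0$-term to reconstitute exactly $\Sigma^y_{K_1,K_2}$ (and not some other weighted covariance) in $D_{K_j}C^y$; getting the trace cyclings right here is the one place where a sign or a constant can slip. A related point, needed to legitimate differentiating under the sums, is that all the series above converge absolutely on $\Theta$ even when $\|\hat A\|>1$ — only $\gamma\|\hat A\|^2<1$ is assumed — which follows from the geometric factor $(\gamma\|\hat A\|^2)^t$.
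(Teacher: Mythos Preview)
Your proposal is correct, and Step~1 (the $\by/\bz$ decoupling and cost splitting) matches the paper exactly. In Step~2 you take a different but equally valid route: you differentiate the Lyapunov equation~\eqref{eq:eq-Py_K1K2} directly, obtain $P'=\sum_{t\ge0}\gamma^t(\hat A^\top)^tF_j\hat A^t$, and then use the trace representation $C^y=\mathrm{tr}(P^yW)$ together with the identity $\Sigma^y=\sum_{t\ge0}\gamma^t\hat A^tW(\hat A^\top)^t$ to read off $D_{K_j}C^y[\delta]=\mathrm{tr}(F_j\Sigma^y)$. The paper instead writes the one-step Bellman recursion $C_y(K_1,K_2,\tilde y)=\tilde y^\top S\tilde y+\gamma\EE[C_y(K_1,K_2,\hat A\tilde y+\epsilon^1_1)]$, combines it with $\nabla_{\tilde y}C_y=2P^y\tilde y$, applies the chain rule, and unrolls the resulting recursion for $\nabla_{K_1}C_y$ --- precisely the ``performance-difference / advantage'' alternative you mention at the end. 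Your approach is a bit more algebraic and makes the role of the stabilizing condition $\gamma\|\hat A\|^2<1$ explicit (it gives both the contraction for $P^y$ and the absolute convergence needed to differentiate under the sum); the paper's approach is closer in spirit to the policy-gradient theorem and avoids computing $D_{K_j}P^y$ as an intermediate object. Either way the noise-covariance bookkeeping you flag as the ``main obstacle'' is the same: both routes need the observation that the $\EE[y_0y_0^\top]$ term and the $\tfrac{\gamma}{1-\gamma}\mathrm{Cov}(\epsilon^1)$ term recombine into~$\Sigma^y$.
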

\begin{proof}
    We note that the utility can be split as 
    $
        C(\theta_1,\theta_2)
        =
        \EE_{\tilde y, \tilde z}\Big[
        C_{y}(K_1,K_2, \tilde y)
        +
        C_{z}(L_1,L_2, \tilde z)
        \Big],
    $
    where
    \begin{align*}
    &C_{y}(K_1,K_2, \tilde y) 
    =
    \EE \sum_{t \ge 0} \gamma^t \Big[
    (y^{L_1,L_2}_t)^{\top} Q y^{K_1,K_2}_t
    \\ 
    &\quad + \sum_{i=1}^2 (-1)^i(u_{i,t}-\bar{u}_{i,t})^{\top} R_{i} (u_{i,t}-\bar{u}_{i,t})  
    \,|\, y_0 = \tilde y
    \Big]
    \end{align*}
    and analogously for $C_{z}$. 
    Let us consider the first part. We note, using the above definition together with~\eqref{eq:eq-Py_K1K2} and the dynamics satisfied by $y^{K_1,K_2}_t = x^{\theta_1,\theta_2}_t - \bar{x}^{\theta_1,\theta_2}_t$, that
    \begin{align*}
    C_{y}(K_1,K_2, \tilde y) 
    &= \tilde y^{\top} P^y_{K_1,K_2} \tilde y
    + \frac{\gamma}{1-\gamma} \EE[(\epsilon^1_1)^{\top} P^y_{K_1,K_2} \epsilon^1_1],
    \end{align*}
    and thus 
    $
    \nabla_{\tilde y} C_y(K_1,K_2,\tilde y)
    = 2 P^y_{K_1,K_2} \tilde y.
    $
    Moreover,
    \begin{align*}
    &C_{y}(K_1,K_2, \tilde y) =
    \tilde y^\top (Q + K_1^\top R_1 K_1 - K_2^\top R_2 K_2) \tilde y
    \\
    &\quad + \gamma \EE \Big[
    C_{y}\Big(K_1,K_2, (A - B_1 K_1 + B_2 K_2) \tilde y\Big)
    \,|\, y_0 = \tilde y
    \Big].
    \end{align*}
    Using the two above equalities and the chain rule, we obtain 
    \begin{align*}
    &\nabla_{K_1} C_{y}(K_1,K_2, \tilde y) 
    \\
    &=
    2 R_1 K_1 \tilde y \tilde y^\top 
     - 2 \gamma B_1^\top  P^y_{K_1,K_2} (A - B_1 K_1 + B_2 K_2) \tilde y \tilde y^\top
    \\
    &\quad + \gamma \EE \Big[
    \nabla_{K_1} C_{y}\Big(K_1,K_2, \tilde y'\Big)_{\big| \tilde y' = (A - B_1 K_1 + B_2 K_2) \tilde y + \epsilon^1_1}
    \Big].
    \end{align*}
    Using recursion and the equation satisfied by $P^y_{K_1,K_2}$ yields
    \begin{align*}
    &\nabla_{K_1} C_{y}(K_1,K_2, \tilde y) 
    \\
    &
    =
    2 [R_1 K_1  - \gamma B_1^\top  P^y_{K_1,K_2} (A - B_1 K_1 + B_2 K_2)]
    \\
    &\quad 
    \Big(\tilde y \tilde y^\top 
    + \EE\Big[ \sum_{t \ge 1} \gamma^t y^{K_1,K_2}_t (y^{K_1,K_2}_t)^\top \Big] \Big) .
    \end{align*}
    With similar computations, we obtain $\nabla_{K_2} C_{y}$.   
    We proceed similarly for the gradients with respect to $L_1$ and $L_2$.
\end{proof}

\section{Proposed Algorithms}
\label{sec4:algo}

In this section, we propose policy-gradient algorithms to find the NE of the zero-sum MFTG. After introducing model-based methods, we explain how to extend them to sample-based algorithms in which the gradient is estimated using a simulator providing stochastic realizations of the utility.

\subsection{Model-based policy optimization} 
Let us assume that the model is known and both players can see the actions of one another at the end of each time step. 
To explain the intuition behind the iterative methods, we first express the optimal control of a player when the other player has a fixed control. 
 For some given $\theta_2=(K_2,L_2)$, the inner minimization problem for player $1$ becomes an LQR problem with instantaneous utility at time $t$:
\begin{align*}
    &(x_t-\bar{x}_t)^{\top} \mathbf{Q}_{K_2} (x_t-\bar{x}_t) + \bar{x}^{\top} \mathbf{\tilde Q}_{K_2} \bar{x}
    \\ 
    &\qquad + (u_{1,t}-\bar{u}_{1,t})^{\top} R_{1} (u_{1,t}-\bar{u}_{1,t}) + \bar{u}^\top_{1,t} (R_{1}+\bar{R}_{1}) \bar{u}_{1,t}, 
\end{align*}
when player $1$ uses control $u_1$, where $\mathbf{Q}_{K_2}=Q-K_2 R_2 K_2$ and $\mathbf{\tilde Q}_{L_2}= \tilde Q - L_2\tilde R_2 L_2$, and state dynamics given by:
\begin{multline}
    \nonumber
    x_{t+1} = \mathbf{A}_{K_2} x_t + \mathbf{\bar A}_{K_2,L_2} \bar{x}_t
    \\
    \nonumber 
    + B_1 u_{1,t} 
    + \bar{B}_1 \bar{u}_{1,t}
    + \epsilon^0_{t+1} + \epsilon^1_{t+1},
\end{multline}
where $\mathbf{A}_{K_2}=A+B_2K_2$ and $\mathbf{\bar A}_{K_2,L_2}=\bar A + \bar B_2L_2 + B_2(L_2 - K_2)$. 
Inspired by the results in~\cite{fazel2018global}, we propose to find the stationary point $\theta_1^*(\theta_2) = (K_1^*(K_2), L_1^*(L_2))$ of the inner problem. By setting
$\nabla_{\theta_1}C(\theta_1,\theta_2) = 0$ and by Proposition~\ref{lem:PG_expression},  
\begin{equation}
\label{eq:K1star_K2}
    K_1^*(K_2) = \gamma (R_1+\gamma B_1^{\top}P^{y}_{K_2}B_1)^{-1} B_1^{\top} P^y_{K_2}\left[A + B_2 K_2\right],
\end{equation}
where $P^y_{K_2} = P^y_{K_1^*(K_2),K_2}$ solves
  \begin{align*}
        &P^y_{K_2}
        =
        \tilde Q_{K_2} 
        + \gamma \tilde A_{K_2}^\top P^y_{K_2} \tilde A_{K_2}
        \\
        & 
        - \gamma^2 \tilde A_{K_2}^\top P^y_{K_2} B_1 (R_1+\gamma B_1^{\top}P^{y}_{K_2}B_1)^{-1} B_1^\top P^y_{K_2} \tilde A_{K_2},
    \end{align*}
where $\tilde{Q}_{K_2}=Q-K_2^{\top}R_2 K_2$ and $\tilde{A}_{K_2} = A+B_2K_2$. This equation is obtained by considering the equation~\eqref{eq:eq-Py_K1K2} for $P^y_{K_1,K_2}$ and replacing $K_1$ by the above expression~\eqref{eq:K1star_K2} for $K_1^*(K_2)$. One can similarly introduce $K_2^*(K_1)$, which is the optimal $K_2$ for a given $K_1$, and likewise for $L_1^*(L_2), L_2^*(L_1)$.

Based on this idea and inspired by the works of Fazel et al.~\cite{fazel2018global} and Zhang et al.~\cite{zhang2019policy}, we propose two iterative algorithms relying on policy-gradient methods, namely alternating-gradient and gradient-descent-ascent, to find the optimal values of $\theta_1$ and $\theta_2$. Starting from an initial guess of the control parameters, the players update either alternatively or simultaneously their parameters by following the gradients of the utility function. In the \emph{alternating-gradient} (AG) method, the players take turn in updating their parameters. Between two updates of $\theta_2$, $\theta_1$ is updated $T_1$ times. This procedure is summarized in Algorithm~\ref{algo:AG-ZSMFC}, which is based on nested loops. In the \emph{gradient-descent-ascent} (GDA) method, all the control parameters are updated synchronously at each iteration, as presented in Algorithm~\ref{algo:GDA-ZSMFC}. 

At each step of these methods, the gradients can be computed directly using the formulas provided in Proposition~\ref{lem:PG_expression}. For instance, in the inner loop of the AG method, based on~\eqref{eq:PG_expression_Kj}, parameter $K_1$ can be updated by:
\begin{align*}
    K_1^{t_1+1,t_2} 
    &= K_1^{t_1,t_2}-\eta_1 \nabla_{K_1} C(\theta_1^{t_1,t_2},\theta_2^{t_2-1})
    \\ 
    \nonumber 
    &=K^t_1-2\eta_1\Big[(R_1+B_1^{\top}P^{y}_{K_1^{t_1,t_2},K_2^{t_2-1}}B_1)K_1^{t_1,t_2}
    \\
    &\qquad\qquad -B^{\top}_1P^{y}_{K_1^{t_1,t_2},K_2^{t_2-1}}\tilde{A}_{K_2^{t_2-1}}\Big]\Sigma_{K_1^{t_1,t_2},K_2^{t_2-1}}.  
\end{align*}
Then, in the outer loop, one can compute $\nabla_{K_2} C$ at the point $(\theta_1^{T_1,t_2},\theta_2^{t_2-1})$ using again~\eqref{eq:PG_expression_Kj}.

In order to have a benchmark, one can compute the equilibrium $(\theta_1^*,\theta_2^*)$ by solving the Riccati equations~\eqref{eq:main_ARE_P_ZS}--\eqref{eq:main_ARE_Pbar_ZS} and then using the expression~\eqref{eq:MKV-opt-ctrl-formula}. Alternatively, the Nash equilibrium can be computed by finding $K_2$ such that $\nabla_{K_2} C_y(K_1^*(k_2),K_2)_{\big|k_2 = K_2} = 0$. The left-hand side has an explicit expression obtained by combining~\eqref{eq:PG_expression_Kj} and~\eqref{eq:K1star_K2}.

\begin{algorithm}
\caption{Alternating-Gradient method}
\label{algo:AG-ZSMFC} 
\begin{algorithmic} 
    \REQUIRE Number of inner and outer iterations $T_1, T_2$; initial guess $\theta_1^0, \theta_2^0$; learning rates $\eta_1,\eta_2$
    \ENSURE $(K^*_1,K^*_2)$ 
        \STATE $\theta_1^{0,1} \leftarrow \theta_1^0$\;
        \FOR{$t_2 = 1, 2, \dots, T_2$ }  
			\FOR{$t_1 = 1, 2, \dots, T_1$ }  
			\STATE  
			$\theta_1^{t_1,t_2} \leftarrow \theta_1^{t_1-1,t_2} - \eta_1 \nabla_{\theta_1} C(\theta_1^{t_1-1,t_2},\theta_2^{t_2-1})$\; 
	     	\ENDFOR
			\STATE $\theta_2^{t_2} \leftarrow \theta_2^{t_2-1} + \eta_2 \nabla_{\theta_2} C(\theta_1^{T_1,t_2},\theta_2^{t_2-1})$\; 
		\ENDFOR
		\RETURN $(\theta_1^{T_1,T_2},\theta_2^{T_2})$
\end{algorithmic}
\end{algorithm}   

\begin{algorithm}
\caption{Gradient-Descent-Ascent method}
\label{algo:GDA-ZSMFC} 
\begin{algorithmic} 
    \REQUIRE Number of iterations $T$; initial guess $\theta_1^{0}, \theta_2^0$; learning rates $\eta_1,\eta_2$
    \ENSURE $(K^*_1,K^*_2)$ 
        \FOR{$t = 1, 2, \dots, T$ } 
			\STATE  
			$\theta_1^{t} \leftarrow \theta_1^{t-1} - \eta_1 \nabla_{\theta_1} C(\theta_1^{t-1},\theta_2^{t-1})
			$\; 
			\STATE $\theta_2^{t} \leftarrow \theta_2^{t-1} + \eta_2 \nabla_{\theta_2} C(\theta_1^{t-1},\theta_2^{t-1})$\; 
	     	\ENDFOR
		\RETURN $(\theta_1^{T},\theta_2^{T})$
\end{algorithmic}
\end{algorithm}

\subsection{Sample-based policy optimization}

The aforementioned methods use explicit expressions for the gradients, which rely on the knowledge of the model.  
However, in many situations these coefficients are not known. Instead, let us assume that we have access to the following  
simulator, called \emph{MKV simulator} and denoted by $\cS^{\mathcal{T}}_{MKV}$: given a control parameter $\theta = (\theta_1, \theta_2) = (K_1, L_1, K_2, L_2)$, $\cS^{\mathcal{T}}_{MKV}(\theta)$ returns a sample of the mean-field utility (i.e., the quantity inside the expectation in equation~\eqref{fo:MKV-discounted_utility_ZS}) for the MKV dynamics~\eqref{eq:MKV-state_ZS} using the control $\theta$ and truncated at time horizon $\mathcal{T}$. This type of simulator is similar to the one introduced in~\cite{carmona2019linear} when there is a single controller. 

In other words, it returns a realization of the social utility $\sum_{t=0}^{\mathcal{T}-1} \gamma^t c_{t}$, where $c_t$ is the instantaneous mean-field utility at time $t$, see~\eqref{eq:instantaneous-utility}. This is used in Algorithm~\ref{algo:ZSMFC-MKVestim}, which provides a way to estimate the gradient of the utility with respect to the control parameters of the first player. One can estimate the gradient with respect to the control parameters of the second player in an analogous way. The estimation algorithm uses the simulator to obtain realizations of the (truncated) utility when using perturbed versions of the controls. In order to estimate the gradient of $C_y$, we use $2M$ perturbations $v_{1,1,i}, v_{1,2,i}$ which are i.i.d. with uniform distribution $\mu_{\mathbb{S}_\tau}$ over the sphere $\mathbb{S}_\tau$ of radius $\tau$. The first index corresponds to the player ($1$ or $2$), the second index corresponds to the part of the control being perturbed ($K$ or $L$) and the last index corresponds to the index of the perturbation (between $1$ and $M$). See e.g.~\cite{fazel2018global} for more details.  Notice that, although the simulator needs to know the model in order to sample the dynamics and compute the utilities, Algorithm~\ref{algo:ZSMFC-MKVestim} uses this simulator as a black-box (or an oracle), and hence uses only samples from the model and not the model itself.

\begin{algorithm}
	\caption{Sample-Based Gradient Estimation for Player~1}
	\label{algo:ZSMFC-MKVestim}
	\begin{algorithmic}
	\STATE {\bfseries Data:} {Parameter $\theta = (\theta_1, \theta_2) = (K_1,L_1,K_2,L_2)$; number of perturbations $M$; length $\mathcal{T}$; radius $\tau$}
	\STATE {\bfseries Result:} {An estimator for $\nabla_{\theta_1} C(\theta)$}
	 
		\FOR{$i = 1, 2, \dots, M$}
			\STATE Sample $v_{1,1,i}, v_{1,2,i}$ i.i.d. $\sim \mu_{\mathbb{S}_\tau}$\; 
			\STATE Set $\check\theta_{1,i} := (K_{1,i}, L_{1,i}) := (K_1 + v_{1,1,i}, L_1 + v_{1,2,i})$\;
			\STATE Set $\check\theta_i = (\check\theta_{1,i}, \theta_{2})$\; 
			\STATE Sample $\tilde{C}^i$ using MKV simulator  $\cS^{\mathcal{T}}_{MKV}(\check\theta_i)$\;
		\ENDFOR
		\STATE Set $\tilde{\nabla}_{K_1} C(\theta) =   \frac{d}{\tau^2} \frac{1}{M} \sum_{i=1}^M \tilde{C}^i v_{1,1,i},$ 
		\STATE and $\tilde{\nabla}_{L_1} C(\theta) = \frac{d}{\tau^2} \frac{1}{M} \sum_{i=1}^M \tilde{C}^i  v_{1,2,i}$ \;
		\STATE {\bfseries Return: }{$\tilde\nabla_{\theta_1} C(\theta)  := \diag\left(\tilde{\nabla}_{K_1} C(\theta), \tilde{\nabla}_{L_1} C(\theta)   \right)$}
\end{algorithmic}
\end{algorithm}

\section{Numerical Results}
\label{sec5:num}

In this section, we provide numerical results both for model-based and sample-based versions of the two methods presented in the previous section.  

\textbf{Setting.} The specification of the model used in the simulations is given in Table~\ref{tab:simulation_parameters_ZS}. This setting has been chosen so that it allows us to illustrate the convergence of the method when the equilibrium controls are not symmetric, i.e. $\theta_1 \neq \theta_2$. To be able to visualize the convergence of the controls, we focus on a one-dimensional example, that is, $d = \ell = 1$.

\textbf{Model-based results. }
The parameters used are given in Table~\ref{tab:simulation_parameters_ZS}. This choice of parameters is based on the values used for a single controller in~\cite{carmona2019linear} and numerical experiments.

Fig.~\ref{fig:1d-exact-surfaces} displays the trajectory of $(K_1,K_2) \mapsto C_y(K_1,K_2)$ and  $(L_1,L_2) \mapsto C_z(L_1,L_2)$ generated by the iterations of AG and DGA methods. Iterations are counted in the following way:  in AG at iteration $k$, $(\theta_1^k, \theta_2^k) = (\theta_1^{k \, \textrm{mod}\, T_1, \lceil k/T_1 \rceil}, \theta_2^{\lceil k/T_1 \rceil-1})$, while in DGA one step of for-loop corresponds to one iteration.  The utility at the starting point and the utility at the Nash equilibrium are respectively given by a black star and a red dot.
In the AG method, since $\theta_1$ is updated $T_1$ times between two updates of $\theta_2$, the trajectory moves faster in the  $\theta_1$-direction until it reaches an approximate best response against $\theta_2$, after which the trajectory moves towards the Nash equilibrium.  This is also confirmed by the convergence of the parameters $\theta = (K_1,L_1,K_2,L_2)$ in Fig.~\ref{fig:1d-exact-params}. 
The relative error on the utility is shown in Fig.~\ref{fig:1d-exact-utility}. The convergence is slower with AG because player $2$ updates her control only every $T_1$ iterations.

\begin{figure}
	\begin{subfigure}{0.5\columnwidth}
		\centering
		\includegraphics[width=1.05\columnwidth]{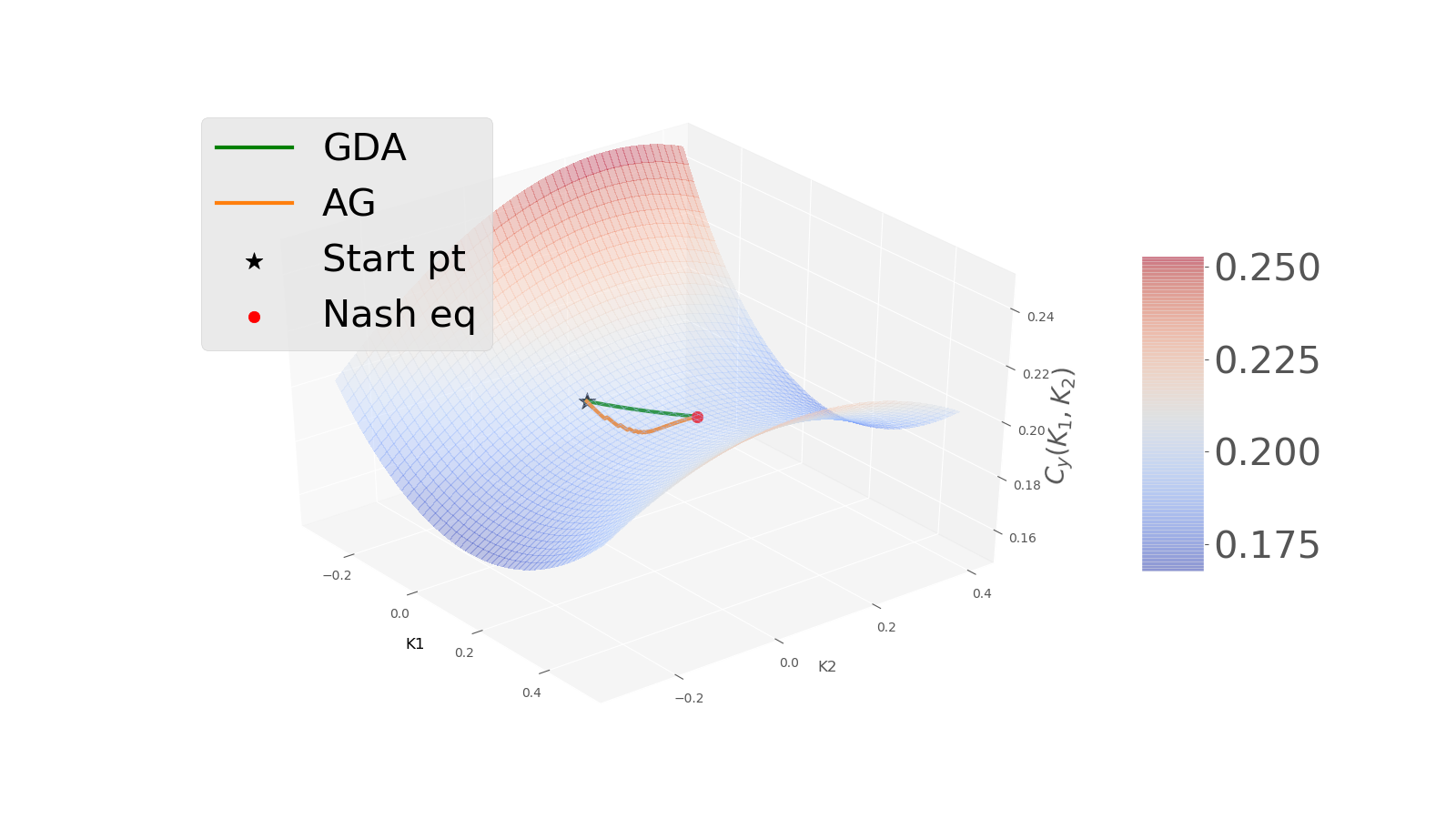}
		\caption{\,}
		\label{fig:1d-exact-surface-K}
	\end{subfigure}%
	\begin{subfigure}{0.5\columnwidth}
		\centering 
		\includegraphics[width=1.05\columnwidth]{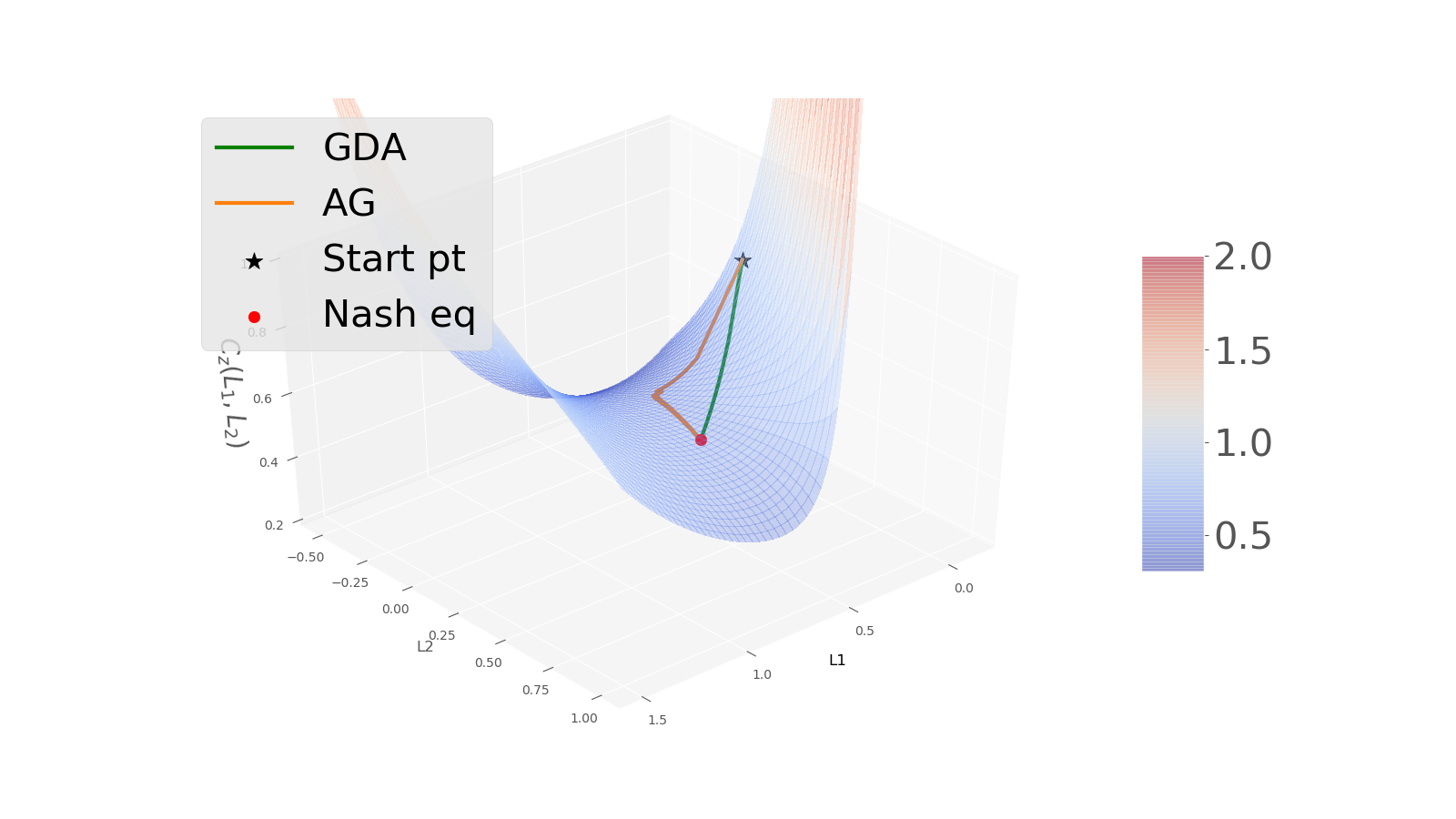}
		\caption{\,}
		\label{fig:1d-exact-surface-L}
	\end{subfigure}
	\caption{Model-based policy optimization: Convergence of each part of the utility. (a)~$C_y$ as a function of $(K_1,K_2)$. (b)~$C_z$ as a function of $(L_1,L_2)$.}
	\label{fig:1d-exact-surfaces}
\end{figure}

\begin{figure}
	\begin{subfigure}{.5\columnwidth}
		\centering 
		\includegraphics[width=1.05\columnwidth]{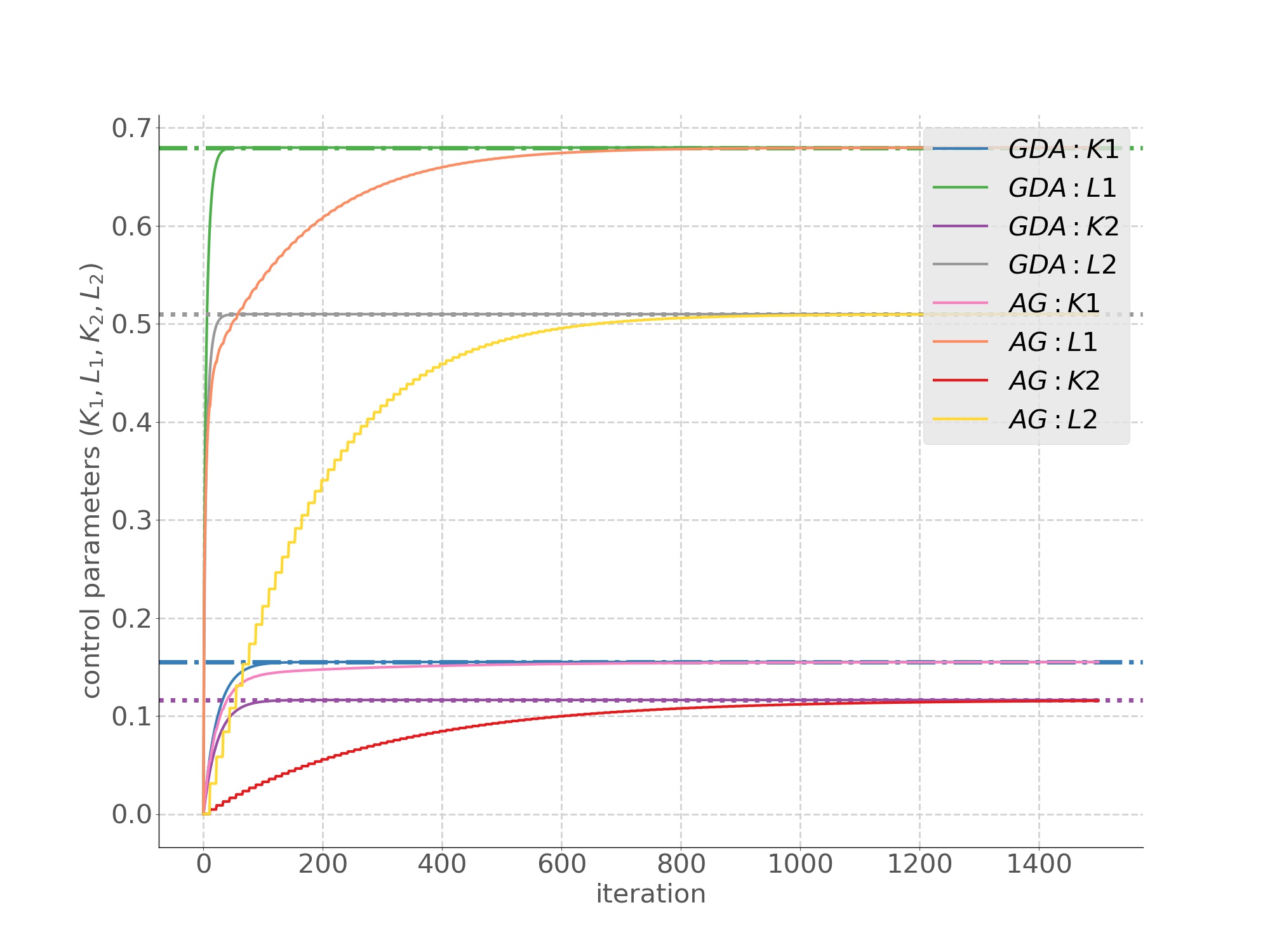}
		\caption{\,}
		\label{fig:1d-exact-params}
	\end{subfigure}%
	\begin{subfigure}{.5\columnwidth}
		\centering %
		\includegraphics[width=1.05\columnwidth]{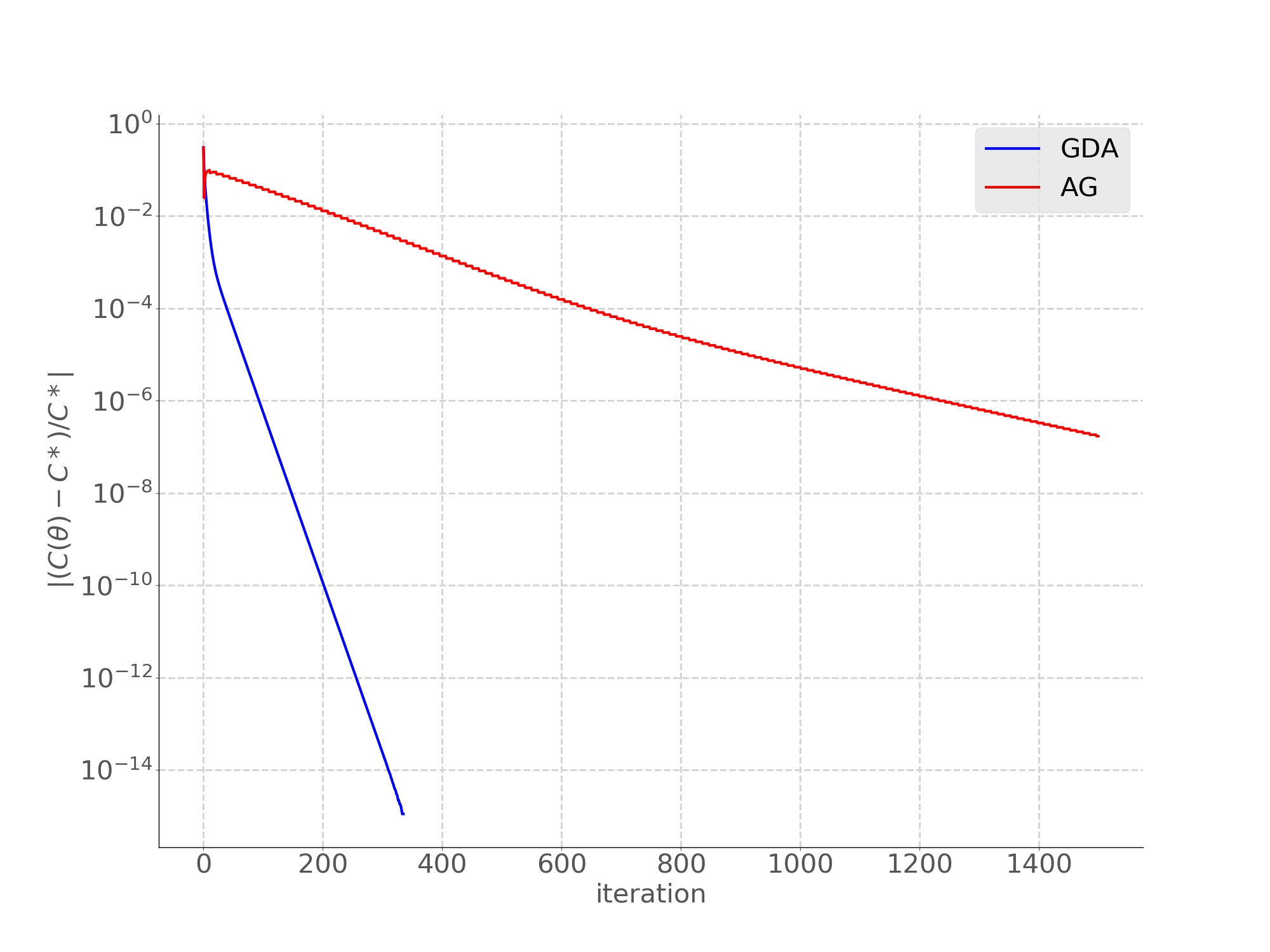}
		\caption{\,}
		\label{fig:1d-exact-utility}
	\end{subfigure}
	\caption{Model-based policy optimization: Convergence of the control parameters in~(a) and of the relative error on the utility in~(b).}
	\label{fig:1d-exact-params-utility}
\end{figure}

\textbf{Sample-based results. } The parameters, chosen based on the values in~\cite{carmona2019linear} as well as numerical experiments, are given in Table~\ref{tab:simulation_parameters_ZS}. The figures are obtained by averaging the results over 5 experiments, each based on a different realization of the randomness (initial points, dynamics and gradient estimation). 
Fig.~\ref{fig:1d-modelfree-surfaces} displays the trajectory of $(K_1,K_2) \mapsto C_y(K_1,K_2)$ and  $(L_1,L_2) \mapsto C_z(L_1,L_2)$ generated by the iterations of AG and DGA methods. The convergence of the parameters $\theta = (K_1,L_1,K_2,L_2)$ and the evolution of the relative error on the utility are shown in Fig.~\ref{fig:1d-modelfree-params} and~\ref{fig:1d-modelfree-utility}.

\begin{figure}
	\begin{subfigure}{0.5\columnwidth}
		\centering 
		\includegraphics[width=1.05\columnwidth]{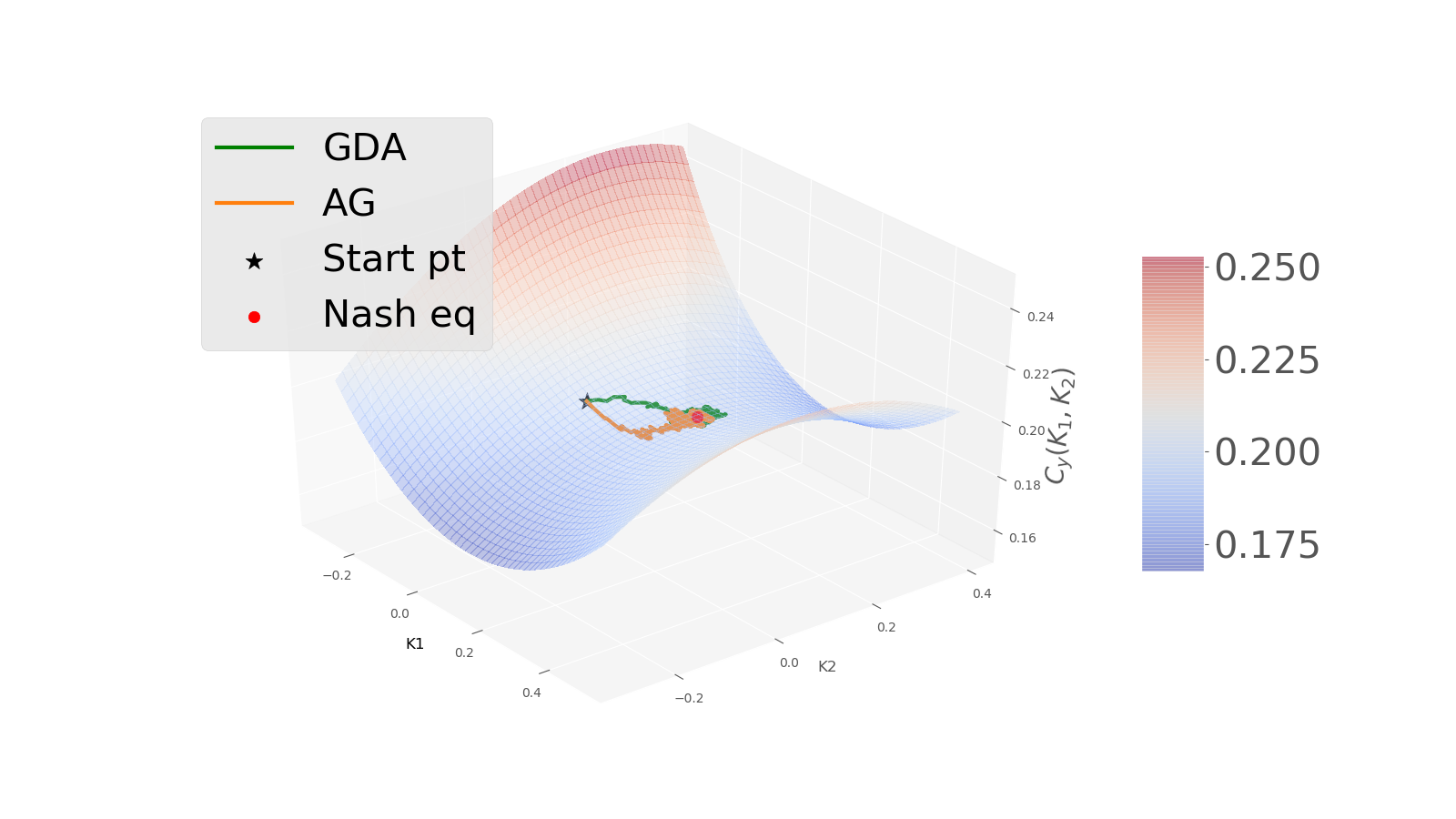}
		\caption{\,}
		\label{fig:1d-modelfree-surface-K}
	\end{subfigure}%
	\begin{subfigure}{0.5\columnwidth}
		\centering %
		\includegraphics[width=1.05\columnwidth]{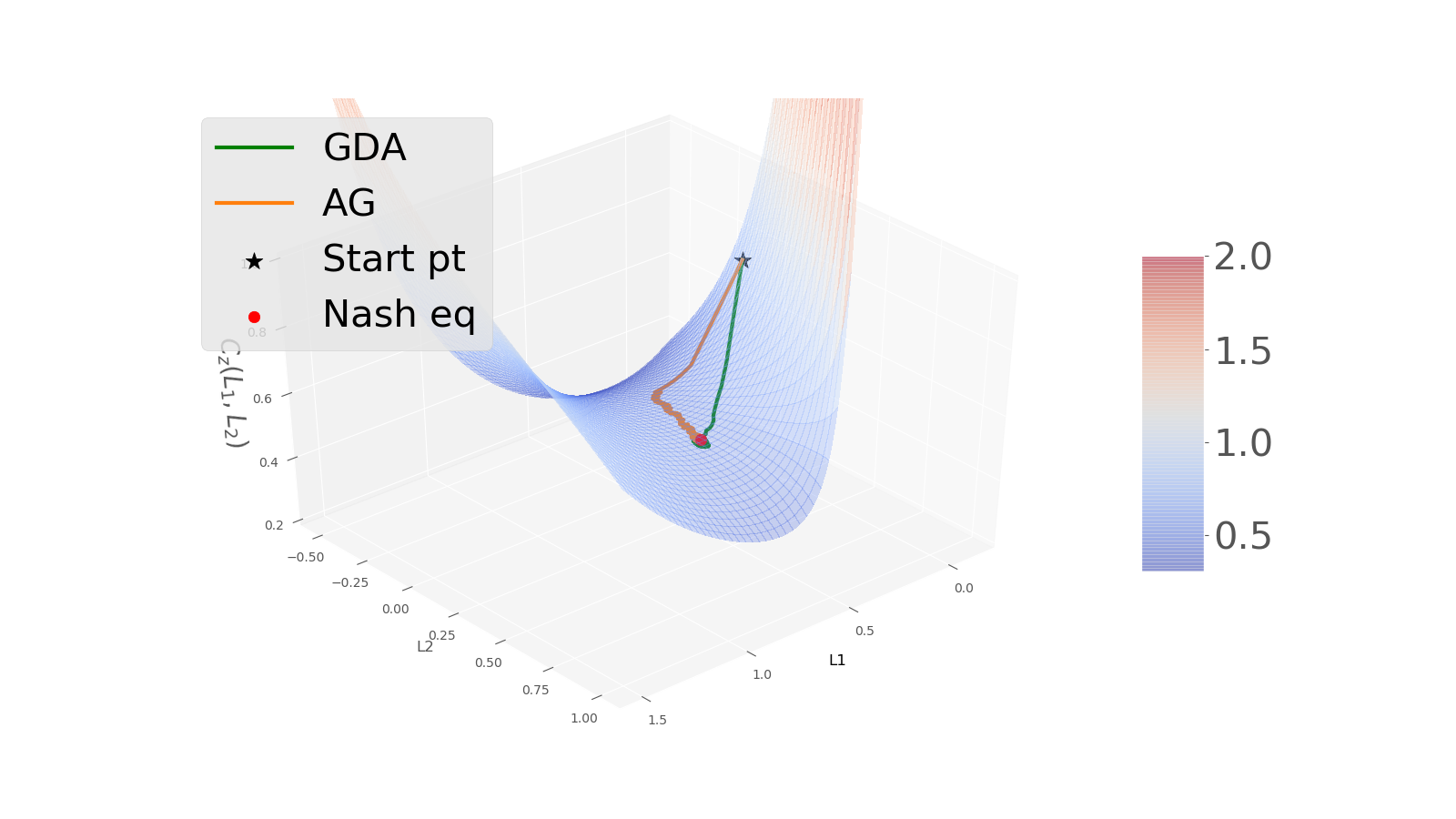}
		\caption{\,}
		\label{fig:1d-modelfree-surface-L}
	\end{subfigure}
	\caption{Sample-based policy optimization: Convergence of each part of the utility. (a)~$C_y$ as a function of $(K_1,K_2)$. (b)~$C_z$ as a function of $(L_1,L_2)$.}
	\label{fig:1d-modelfree-surfaces}
\end{figure}

\begin{figure}
	\begin{subfigure}{0.5\columnwidth}
		\centering 
		\includegraphics[width=1.05\columnwidth]{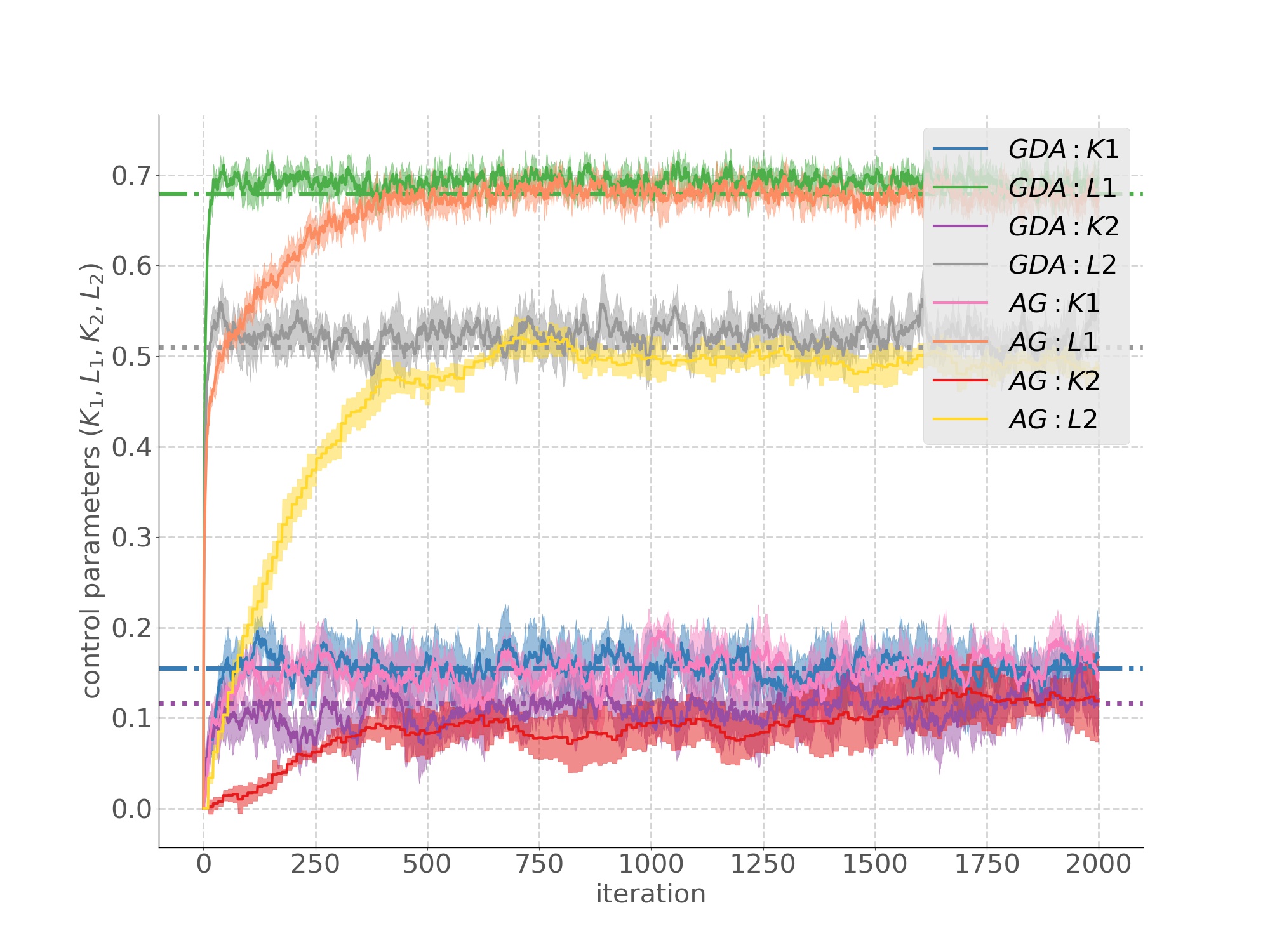}
		\caption{\,}
		\label{fig:1d-modelfree-params}
	\end{subfigure}%
	\begin{subfigure}{0.5\columnwidth}
		\centering %
		\includegraphics[width=1.05\columnwidth]{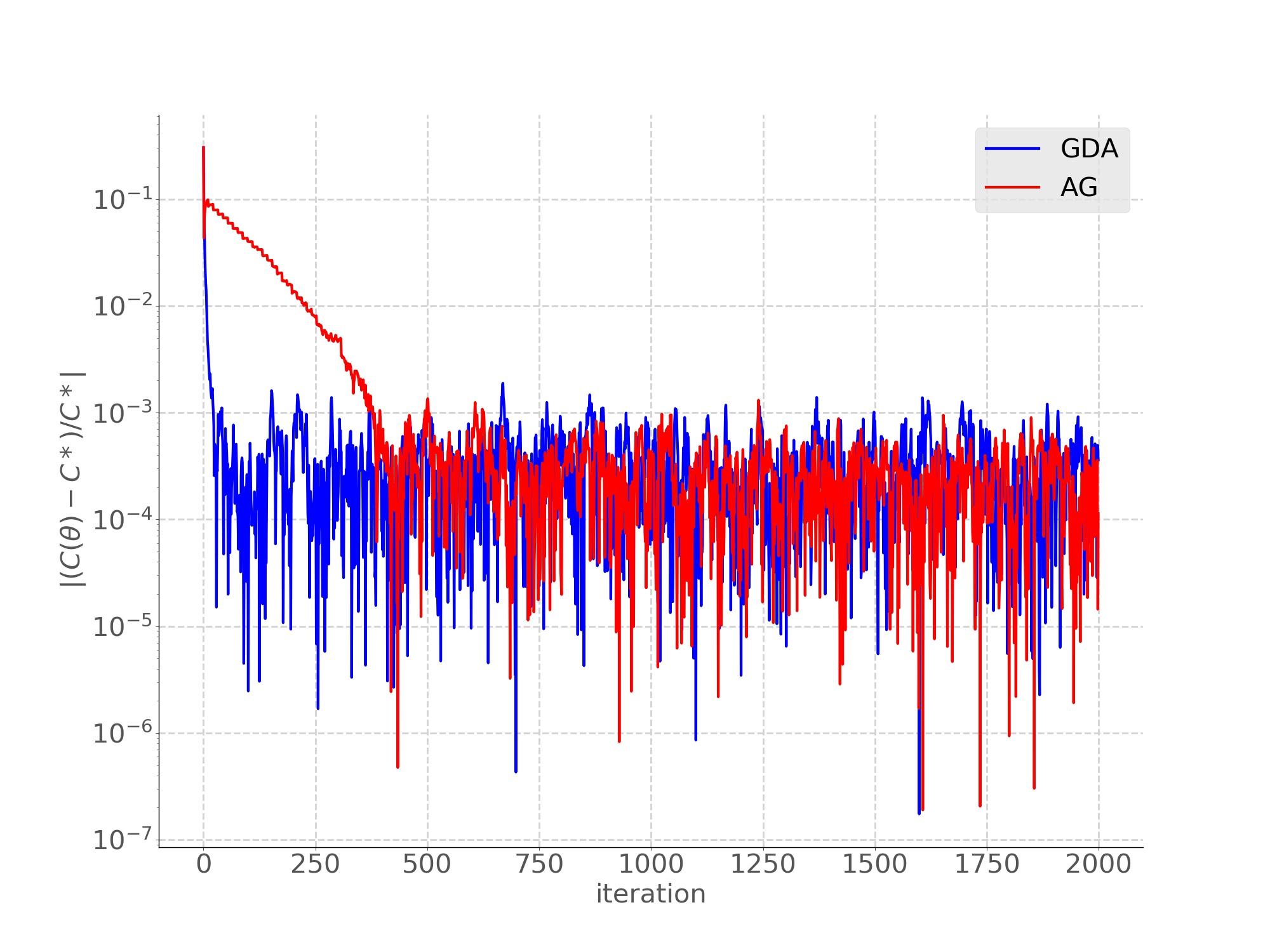}
		\caption{\,}
		\label{fig:1d-modelfree-utility}
	\end{subfigure}
	\caption{Sample-based policy optimization: Convergence of the control parameters in~(a) and of the relative error on the utility in~(b).}
	\label{fig:1d-modelfree-params-utility}
\end{figure}

\renewcommand{\arraystretch}{1.3}

\begin{table}[h!]
	
	\begin{center}
		\caption{Simulation parameters}       
		\begin{adjustbox}{width=0.95\columnwidth,center}
			\begin{tabular}{ccccccccc}
				
				\hline
				
				\multicolumn{9}{c}{Model parameters}\\
				
				\hline \hline
				
				 $A$ & $\overline{A}$ & $B_1=\overline{B}_1$ & $B_2=\overline{B}_2$ & $Q$ & $\overline{Q}$ & $R_1=\overline{R}_1$ & $R_2=\overline{R}_2$ & $\gamma$ \\
				
				\hline
				
				 0.4 & 0.4 & 0.4 & 0.3 & 0.4 & 0.4 & 0.4 & 0.4 & 0.9 \\
				
				\hline \hline
				
				
				\hline
				
				\multicolumn{9}{c}{Initial distribution and noise processes}\\
				
				\hline \hline
				
				 & \multicolumn{2}{c}{$\epsilon_0^0$} &  \multicolumn{2}{c}{$\epsilon^1_0$} &  \multicolumn{2}{c}{$\epsilon^0_t$} &  \multicolumn{2}{c}{$\epsilon^1_t$}  
				 \\
				
				\hline
				
				 & \multicolumn{2}{c}{$\mathcal{U}([-1, 1])$} & \multicolumn{2}{c}{$\mathcal{U}([-1, 1])$} & \multicolumn{2}{c}{$\mathcal{N}(0, 0.01)$} & \multicolumn{2}{c}{$\mathcal{N}(0, 0.01)$} 
				 \\
				
				\hline \hline
				
				
				\hline
				
				\multicolumn{9}{c}{AG and DGA methods parameters}\\
				
				\hline \hline
				
				$T_1$ & $T_2$ & $T$  & $\eta_1$ & $\eta_2$ & $K_1^0$ & $L_1^0$ & $K_2^0$ & $L_2^0$\\
				
				\hline
				
				10 & 200 & 2000 & 0.1 & 0.1 & 0.0 & 0.0 & 0.0 & 0.0 \\
				
				\hline \hline
				
				
				\hline
				
				\multicolumn{9}{c}{Gradient estimation algorithm parameters}\\
				
				\hline \hline
				\multicolumn{3}{c}{$\mathcal{T}$} &
				\multicolumn{3}{c}{$M$} &
				\multicolumn{3}{c}{$\tau$} 
				 \\
				
				\hline
				\multicolumn{3}{c}{50} &
				\multicolumn{3}{c}{10000} &
				\multicolumn{3}{c}{0.1} 
				\\
				
				\hline \hline

			\end{tabular}
		\end{adjustbox}

		\label{tab:simulation_parameters_ZS}
	\end{center}
	
\end{table}

\section{Conclusion}
\label{sec6:conc}
We have studied zero-sum mean-field type games with linear quadratic model under infinite-horizon discounted  utility  function. We have identified the  closed-form expression  of the  Nash equilibrium controls as linear combinations  of  the  state and  its  mean. Moreover, we have proposed two policy optimization methods to learn the equilibrium. Numerical results have shown the convergence of the two methods in both model-based and sample-based settings.

\section{Acknowledgments}

The research of M. Lauri\`ere is supported by NSF grant DMS--1716673 and ARO grant W911NF--17--1--0578.


{\footnotesize
\bibliographystyle{IEEEtran}
 \bibliography{references}}

\end{document}